\newtheorem{theorem}{Theorem}[section]
\newtheorem{lemma}[theorem]{Lemma}
\renewcommand{\Re}{\operatorname{Re}}
\newcommand{\U}{ \mathcal{U}}
\newtheorem{thmA}{Theorem}
\renewcommand{\Re}{\operatorname{Re}}
\renewcommand{\Im}{\operatorname{Im}}
\begin{document}
	\title{Universality of the zeta function in short intervals}
	
	\author{Yoonbok Lee}
	\address{Department of Mathematics, Incheon National University, Incheon 22012, Korea}
	\email{leeyb131@gmail.com, leeyb@inu.ac.kr}
	
	\author{{\L}ukasz Pa\'nkowski}
	\address{Faculty of Mathematics and Computer Science, Adam Mickiewicz University, Uniwersytetu Pozna\'nskiego 4, 61-614 Pozna\'{n}, Poland}
	\email{lpan@amu.edu.pl}

	\thanks{The first author was supported by the National Research Foundation of Korea (NRF) grant funded by the Korea government (MSIT) (No. RS-2024-00415601). The second author was partially supported by the grant no. 2021/41/B/ST1/00241 from the National Science Centre, Poland.}
	
	\subjclass[2010]{Primary: 11M41}
	
	\keywords{universality in short intervals, Riemann Hypothesis.
	}
	
	\begin{abstract}
We improve the universality theorem of the Riemann zeta-function in short intervals by establishing universality for significantly shorter intervals $[T,T+H]$. Assuming the Riemann Hypothesis, we prove that universality in such short intervals holds for $H=(\log T)^B$ with an explicitly given $B>0$. Unconditionally, we show that for the same $H$ the set of real numbers $\tau\in[T,T+H]$ such that $\zeta(s+i\tau)$ approximates an arbitrary given analytic function has a positive upper density.
	\end{abstract}
	
	\maketitle

\section{Introduction}

In 1975, Voronin \cite{V1} proved the remarkable property of the Riemann zeta-function, known as the universality theorem. Let 
\begin{align*} 
	\U &:= \{s\in\mathbb{C}:\frac12 <\Re(s)<1\},\\
D_r(s_0) &:= D_r ( \sigma_0 + i t_0 )  := \{s\in\mathbb{C}:|s-s_0|\leq r\}
\end{align*}
throughout the paper. Let $ D= D_r ( 3/4) \subset \U$. Then he proved that for any continuous non-vanishing function $f(s)$ on $D $, analytic in the interior of $D $, we have
\begin{equation}\label{eq:Voronin_classical}
 \liminf_{T\to\infty}\frac1T \mu \left\{\tau\in [0,T]:\max_{s\in D}|\zeta(s+i\tau)-f(s)|<\varepsilon\right\}>0
\end{equation}
for every $\varepsilon>0$, where $\mu\{\cdot\}$ denotes the Lebesgue real measure. 

Voronin's theorem has been extended to many other zeta and $L$-functions, and research on universality has been conducted in many different directions. For more details, we refer to a comprehensive survey paper by Matsumoto \cite{Mat}.  One of the most significant directions is the problem of effectivization of Voronin's theorem, which was already investigated by Voronin in \cite{V5}. Despite the fact that, up to now, there are various results on this topic (see for example \cite{Gar, GLMSS, Good, L_effec1, L_effec2, S_effec1, S_effec2}), the problem is still far from being well understood. For instance, Laurin\v{c}ikas in \cite{L_short} proposed finding the shortest possible interval of the form $[T,T+H]$ with $H=o(T)$ as $T\to\infty$, for which \eqref{eq:Voronin_classical} holds for $[T,T+H]$ instead of $[0,T]$. He managed to identify that the main constraint on $H$ came from proving a bounded mean-square for $\zeta(s)$, that is, 
\begin{equation}\label{eq:meanSquare}
\frac{1}{H}\int_T^{T+H}|\zeta(\sigma+it)|^2dt\ll_\sigma 1,\qquad \sigma>\frac{1}{2}.
\end{equation}
Since it is well-known (see \cite[Theorem 7.1]{Ivic}) that one can deduce \eqref{eq:meanSquare} from the theory of exponent pairs (see \cite[Section 2.3]{Ivic}), Laurincikas succeeded in proving the following result.
\begin{thmA}[{\cite[Theorem 4]{L_short}}]\label{thm:LaurincikasShort}
Suppose that $T^{1/3}(\log T)^{26/15}\leq H\leq T$. Let $K\subset\U $ be a compact set with connected complement and $f(s)$ be a non-vanishing continuous function on $K$, analytic in the interior of $K$. Then, for every $\varepsilon>0$, we have
\[
\liminf_{T\to\infty} \frac1H \mu \left\{\tau\in [T,T+H]:\max_{s\in K}|\zeta(s+i\tau)-f(s)|<\varepsilon\right\}>0.
\]
\end{thmA}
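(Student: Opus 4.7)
I propose to follow Bagchi's probabilistic approach to Voronin's universality theorem, adapted to the short interval $[T, T+H]$. Let $G$ be a bounded open neighborhood of $K$ with $\overline{G} \subset \U$, and on the space of analytic functions on $G$ (with the topology of uniform convergence on compact subsets) define
\[
P_{T,H}(A) := \frac{1}{H} \mu\{\tau \in [T, T+H]: \zeta(\cdot + i\tau) \in A\}.
\]
The plan reduces the theorem to two assertions: (a) that $P_{T,H}$ converges weakly as $T \to \infty$ to a measure $P$; and (b) that the support of $P$ contains every non-vanishing analytic function on $K$. Once both are in place, $f$ being in the support gives $P(\{g: \max_K |g - f| < \varepsilon\}) > 0$, and the portmanteau theorem converts this into the claimed lower bound for $P_{T,H}$.

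For (a), I would approximate $\zeta(s + i\tau)$ by a smoothed truncated Euler product $\zeta_n(s + i\tau) = \prod_{p \leq n} (1 - v(p,n)\, p^{-s-i\tau})^{-1}$, and handle it in two steps. For the finite product, joint equidistribution of $(p^{-i\tau})_{p \leq n}$ as $\tau \in [T,T+H]$ on the $\pi(n)$-torus follows from Weyl's criterion and the $\mathbb{Q}$-linear independence of $\{\log p\}_{p \leq n}$, and requires only $H \to \infty$. To pass from $\zeta_n$ to $\zeta$, one must establish
\[
\lim_{n \to \infty} \limsup_{T \to \infty} \frac{1}{H}\int_T^{T+H} \sup_{s \in K'} |\zeta(s + i\tau) - \zeta_n(s+i\tau)|^2\,d\tau = 0
\]
for a compact $K' \subset \U$ slightly larger than $K$; this reduces via Cauchy's integral formula and a Gallagher-type second-moment estimate to the mean-square bound \eqref{eq:meanSquare}. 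The limit $P$ is then identified with the law of the random Euler product $\prod_p (1 - \omega(p) p^{-s})^{-1}$, with $\omega(p)$ i.i.d.\ uniform on the unit circle.

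For (b), density of the values of the random Euler product among non-vanishing analytic functions on $K$ is classical: expand the logarithm and apply a Pechersky-type rearrangement theorem for conditionally convergent series in a real Hilbert space of analytic functions, then invoke Mergelyan's theorem (which requires precisely that $K$ has connected complement) to approximate $\log f$ by polynomials. This step is identical to the classical long-interval argument and needs no short-interval modification.

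The main obstacle — and the sole source of the restriction $H \geq T^{1/3}(\log T)^{26/15}$ — is the mean-square estimate \eqref{eq:meanSquare}. As the authors note, it is obtained from the approximate functional equation combined with bounds on exponential sums via the theory of exponent pairs (\cite[Section~2.3 and Theorem~7.1]{Ivic}); the explicit numerical range is dictated by the best exponent pair available in this setting, and any future improvement there would immediately extend the theorem to shorter $H$.
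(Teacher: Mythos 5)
Your proposal correctly reproduces the Bagchi-style probabilistic argument that Laurin\v{c}ikas uses in \cite{L_short}: weak convergence of $P_{T,H}$ via equidistribution on the $\pi(n)$-torus plus an $L^2$-closeness estimate between $\zeta$ and a truncated Euler product, support determination via Pechersky rearrangement and Mergelyan, and the restriction on $H$ traced entirely to the mean-square bound \eqref{eq:meanSquare} obtained from exponent pairs. This is essentially the same approach as the cited source, and the paper itself (which does not reprove Theorem~\ref{thm:LaurincikasShort}) explicitly confirms that \eqref{eq:meanSquare} is the sole source of the constraint on $H$.
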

The more detailed discussion on  optimal choice of an exponent pair was made in \cite{AGKNPSSSW}, where Andersson et al. showed that in fact one can get a slight improvement of Theorem \ref{thm:LaurincikasShort}, since \eqref{eq:meanSquare}, and in consequence Theorem \ref{thm:LaurincikasShort} too, holds for every $H$ satisfying $T^{23/70}(\log T)^{61/35}\leq H\leq T$. However, one can get the best restriction on $H$ by using a recent result of Bourgain and Watt \cite{BW}, since, as it was showed in \cite[Theorem~1]{AGKNPSSSW}, Theorem~\ref{thm:LaurincikasShort} holds when $T^{1273/4053}\leq H\leq T$. Moreover, they noticed that the smaller $H$ is admissible, if a compact set $K$ is not too close to the critical line. For example, Theorem \ref{thm:LaurincikasShort} holds for every $H$ satisfying $T^{9/35}(\log T)^{61/35}\leq H\leq T$, provided $K\subset\{s\in\mathbb{C}:31/52<\Re(s)<1\}$.

Section 4 in \cite{AGKNPSSSW} is devoted to an interesting discussion addressing the question of how small conjecturally the value $H$ can be. Since \eqref{eq:meanSquare} implies Theorem \ref{thm:LaurincikasShort}, it is almost an immediately consequence of the classical Lindel\"of hypothesis that Theorem \ref{thm:LaurincikasShort} holds for every $H$ satisfying $T^\varepsilon\leq H\leq T$ for any $\varepsilon>0$. Similarly, assuming the Riemann Hypothesis for $\zeta(s)$, which is a stronger conjecture, gives us the truth of Theorem \ref{thm:LaurincikasShort} for every $H$ satisfying $\exp((\log T)^{1-\varepsilon})\leq H\leq T$ for every $\varepsilon>0$ (see \cite[Theorem 4]{AGKNPSSSW}). The main purpose of the paper is to show that   the Riemann Hypthesis allows us to take much smaller $H$ in Theorem \ref{thm:LaurincikasShort}.
Here is what we have on a closed disc assuming RH.
\begin{theorem}\label{main theorem}
	Suppose that the Riemann Hypothesis holds. Let $D=D_r(s_0)  \subset\U$ and $H=(\log T)^B$ with $B>\frac{2}{2(\sigma_0  -r) -1}$. Assume that $f(s)$ is a continuous function on $D$, analytic in the interior of $D$. Then, for every $\varepsilon>0$, we have
	\[
	\liminf_{T\to\infty}\frac{1}{H}\mu\left\{\tau\in[T,T+H]: \max_{s\in D}|\zeta(s+i\tau) - f(s)|<\varepsilon\right\}>0.
	\]
\end{theorem}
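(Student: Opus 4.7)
My plan is to adapt the classical three-step Voronin proof—approximation of $\zeta$ by a truncated Euler product, Pechersky--Mergelyan rearrangement, and torus hitting—to the very short interval $[T,T+H]$, using the Riemann Hypothesis at every step to shorten the approximating Dirichlet polynomial and to control the resulting error terms quantitatively.

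The first step is to establish, under RH, a short-interval mean-square approximation
\[
\frac{1}{H}\int_T^{T+H}\max_{s\in D}\Bigl|\log\zeta(s+i\tau)-\sum_{p\leq X}p^{-s-i\tau}\Bigr|^2\,d\tau=o(1)
\]
for a carefully chosen polylogarithmic $X=X(T)$. Under RH, a Selberg--Tsang type formula expresses $\log\zeta$ on the line $\Re s=\sigma$ as a short prime sum plus an error controlled by the zero density near the critical line, and a Cauchy integral transfers such a line estimate at $\Re s=\sigma_0-r$ to the maximum over $D$. The mean square on $[T,T+H]$ of the tail $\sum_{p>X}p^{-s-i\tau}$ decomposes into a diagonal piece of order $X^{1-2(\sigma_0-r)}$ and an off-diagonal piece of order $X/H$; requiring both to be $o(1)$ yields the constraint $H\gg X^{2(\sigma_0-r)-1}$, and matching this against the minimal $X$ admissible in the Selberg formula is what produces the explicit range $B>2/(2(\sigma_0-r)-1)$.

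Having replaced $\zeta$ by $\zeta_X(s+i\tau)=\prod_{p\leq X}(1-p^{-s-i\tau})^{-1}$ on a density-one subset of $[T,T+H]$, the second step is the standard rearrangement argument: perturb $f$ to a nowhere-vanishing analytic function, apply Mergelyan's theorem on $D$, and reshuffle the conditionally convergent series $\sum_{p}\log(1-p^{-s}e^{-2\pi i\theta_p})^{-1}$ to produce a phase vector $(\theta_p)_{p\leq X}\in\mathbb{T}^{\pi(X)}$ such that $\max_{s\in D}|\zeta_X(s+i\tau)-f(s)|<\varepsilon/2$ whenever the point $(\tau\log p/(2\pi))_{p\leq X}$ lies within a fixed $\delta$-neighbourhood of $(\theta_p)$ on the torus.

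The third step, showing that such $\tau\in[T,T+H]$ form a set of positive lower density, is the main obstacle. For $H\to\infty$ this is Kronecker--Weyl equidistribution via $\mathbb{Q}$-linear independence of $\{\log p\}$, but for $H=(\log T)^B$ the orbit barely moves and a purely quantitative argument is needed. The plan is a smoothed second moment: expand a smooth indicator of the target set on $\mathbb{T}^{\pi(X)}$ in Fourier series and integrate over $[T,T+H]$; the zero frequency contributes a main term proportional to the expected volume, while nonzero frequency vectors $\mathbf n$ give oscillatory terms of size $\ll H^{-1}|\sum_p n_p\log p|^{-1}$. A quantitative linear-independence estimate for $\{\log p\}_{p\leq X}$—of Baker type, or a suitable substitute calibrated to the polylogarithmic range of $X$—must then dominate the off-diagonal contribution. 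Balancing the admissible $X$ (large enough for Steps~1 and~2, small enough for the quantitative independence bound to beat the $1/H$ weight in Step~3) is the most delicate point of the argument, and it is precisely this balance that yields the stated bound $B>2/(2(\sigma_0-r)-1)$.
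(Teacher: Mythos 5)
Your Step~1 is essentially the paper's Lemma~2.2: RH plus the Granville--Soundararajan argument gives a Dirichlet polynomial of length $Y=(\log T)^A$ approximating $\log\zeta(s+i\tau)$ for $s\in D$, and the paper in fact gets a \emph{pointwise} bound of size $(\log T)^{1+A(1/2-\sigma_1)}(\log\log T)^2$ valid for every $\tau\in[T,T+H]$, not just a mean-square one, so no averaging is needed at this stage. The constraint $B>2/(2(\sigma_0-r)-1)$ arises exactly where you expect, from balancing the length $Y$ against $H$ in the later mean-square step.

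The genuine gap is in your Steps~2--3, and it is a substantive one. You ask for a phase vector $(\theta_p)_{p\leq X}\in\mathbb{T}^{\pi(X)}$ with $X$ polylogarithmic in $T$, and then for $(\tau\log p/2\pi)_{p\leq X}$ to land in a fixed $\delta$-neighbourhood of it. But $\pi(X)\to\infty$ with $T$, so the volume of that target on $\mathbb{T}^{\pi(X)}$ is about $\delta^{\pi(X)}\to 0$; even if you could count $\tau\in[T,T+H]$ hitting it, you would not get a positive \emph{lower} density as the theorem requires. On top of that, making Kronecker--Weyl quantitative uniformly in a torus whose dimension grows with $T$ is, as you note, a serious Diophantine obstacle (Baker-type lower bounds for $|\sum n_p\log p|$ degrade badly in the number of primes), and this obstacle does not actually need to be faced. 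The paper decouples the problem: Lemma~2.5 (Pechersky/Mergelyan) produces a \emph{finite} set of primes $M$, whose size depends only on $\varepsilon$ (and an auxiliary parameter $y$, not on $T$), and only the $|M|$ phases $\{\tau\log p/2\pi\}_{p\in M}$ are required to hit a $\delta$-neighbourhood. Since $|M|$ is fixed, the ordinary Kronecker--Weyl theorem on $[T,T+H]$ applies for \emph{any} $H\to\infty$ (the flow has bounded speed, the torus is fixed, so the Weyl-sum bound $\ll 1/(H|\sum m_p\log p|)\to 0$ requires nothing quantitative), and the set $C(\delta,M,T)$ has measure $\sim\delta^{|M|}H$. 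The contribution of the remaining primes $p\leq Y$, $p\notin M$, is then killed not pointwise but in mean square over $C(\delta,M,T)$: the uniform-distribution lemma (Lemma~2.7) handles $p\leq y^*$, the Montgomery--Vaughan inequality on $[T,T+H]$ handles $y^*<p\leq Y$ (this is where $H\gg Y$, hence $B>A$, is used), and a Chebyshev/subharmonicity argument extracts a subset of $C(\delta,M,T)$ of measure $>\tfrac12\delta^{|M|}H$ on which the tail is uniformly small. Your intuition that ``$H=(\log T)^B$ makes the orbit barely move'' is therefore misplaced for the fixed-dimensional torus actually needed; the hard quantitative problem only appears if you insist on controlling all $\pi(X)$ coordinates at once, which the correct argument avoids.
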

In contrast to the previously mentioned approaches for establishing universality in short intervals, which focus on improving the bounded mean-square for the Riemann zeta-function, namely \eqref{eq:meanSquare}, our proof involves approximating the logarithm of the Riemann zeta function using the shortest possible Dirichlet polynomial as in Lemma \ref{lemma GS}. Consequently, the constraint on 
$H$ will be directly determined by the number of terms in a Dirichlet polynomial of minimal length that can effectively approximate $\log\zeta(s)$ with $1/2<\Re(s)<1$.

Lemma \ref{lemma GS} tells us that $ \log \zeta(s)$ has a short Dirichlet polynomial approximation when $s $ is away from any zeros of $\zeta(s)$. It is well-known that  there are not too many zeros off the critical line in the sense that    
\begin{equation}\label{eqn:zerodensity}
  N( \sigma ,T) \ll T^{ \frac32 - \sigma  } ( \log T)^5 
  \end{equation}
by \cite[Theorem 9.19 A]{T}, where $N( \sigma ,T ) $ is the number of zeros $ \rho = \beta + i \gamma $ of the Riemann zeta function with $ \beta > \sigma $ and  $ 0< \gamma <T $. By \eqref{eqn:zerodensity} there exist infinitely many zero free blocks unconditionally, so that we prove a universality in the blocks as follows. 
\begin{theorem}\label{main theorem uncond}
	 Let $D=D_r(s_0)  \subset \U$ and $H=(\log T)^B$ with $B>\frac{4}{2(\sigma_0  -r) -1}$. Assume that $f(s)$ is a continuous function on $D$, analytic in the interior of $D$. Then, for every $\varepsilon>0$, we have
	\[
	\limsup_{T\to\infty}\frac{1}{H}\mu\left\{\tau\in[T,T+H]: \max_{s\in D}|\zeta(s+i\tau) - f(s)|<\varepsilon\right\}>0.
	\]
\end{theorem}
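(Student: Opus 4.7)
The plan is to reduce Theorem \ref{main theorem uncond} to the same Voronin-type argument as in Theorem \ref{main theorem}, by combining Lemma \ref{lemma GS} with a block decomposition that isolates zero-free rectangles unconditionally. Fix $\alpha\in(\tfrac{1}{2},\sigma_{0}-r)$ close enough to $\sigma_{0}-r$ that the final threshold stays below $B$, partition $[T,2T]$ into $\lfloor T/H\rfloor$ disjoint blocks of length $H=(\log T)^{B}$, and call a block $[T',T'+H]$ \emph{good} when $\zeta$ has no zero $\rho=\beta+i\gamma$ with $\beta>\alpha$ and $\gamma$ in a fixed $O(1)$-neighbourhood of $[T',T'+H]$. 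The estimate \eqref{eqn:zerodensity} bounds the number of bad blocks by $O(T^{3/2-\alpha}(\log T)^{5})$, which is $o(T/H)$ since $\alpha>\tfrac{1}{2}$ and $H$ is only polylogarithmic; hence $\gg T/H$ good blocks exist in $[T,2T]$ for every large $T$, producing a sequence $T'_{k}\to\infty$ of good left-endpoints.

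Within a good block every translate $D+i\tau$, $\tau\in[T',T'+H]$, lies at bounded distance from the zero-free rectangle $(\alpha,1)\times[T'-O(1),T'+H+O(1)]$, so Lemma \ref{lemma GS} applies uniformly in $\tau$ and yields
\[
\log\zeta(s+i\tau)=\sum_{n\le X}\frac{\Lambda(n)}{n^{s+i\tau}\log n}+o(1)\qquad(s\in D)
\]
for $X=(\log T)^{A}$ with any $A$ above the admissible threshold. The error in Lemma \ref{lemma GS} is governed by the size of $\log\zeta$ on the boundary of the available zero-free rectangle; unconditionally this boundary bound loses one extra factor of $\log T$ relative to the conditional regime of Theorem \ref{main theorem}, which is precisely what doubles the threshold from $2/(2(\sigma_{0}-r)-1)$ to $4/(2(\sigma_{0}-r)-1)$, in agreement with the hypothesis on $B$.

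Once the approximation is fixed, the classical Voronin--Bagchi machinery is run inside the single block $[T',T'+H]$: since $B$ exceeds $A$, we have $H\gg X$, so a short-interval second-moment estimate for $\sum_{p\le X}(\log p)p^{-s-i\tau}$ provides the same limit distribution as on a long interval; combining Mergelyan's theorem on the approximation of a non-vanishing target $\log f$ by a prime Dirichlet polynomial with the usual Hilbert-space rearrangement then produces a set of $\tau\in[T',T'+H]$ of measure $\ge c(\varepsilon)H$ on which that polynomial approximates $\log f(s)$ within $\varepsilon/2$ in the sup-norm on $D$. Exponentiating yields $\max_{s\in D}|\zeta(s+i\tau)-f(s)|<\varepsilon$ on a subset of $\tau$-measure $\gg H$ inside every good block, and letting $T=T'_{k}\to\infty$ produces $\limsup_{T\to\infty}H^{-1}\mu\{\cdots\}\ge c(\varepsilon)>0$, as required.

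The main difficulty I anticipate is proving the required quantitative form of Lemma \ref{lemma GS} unconditionally: one must verify that inside a zero-free rectangle produced purely by \eqref{eqn:zerodensity}, the boundary estimate for $\log\zeta$ loses exactly one extra power of $\log T$ compared with the conditional case, so that the exponent $4/(2(\sigma_{0}-r)-1)$ is actually attained and no further loss is incurred when transferring the short-interval mean-value estimates from the Dirichlet polynomial back to $\zeta$ itself.
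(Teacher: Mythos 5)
Your overall strategy --- partitioning $[T,2T]$ into blocks of length $H$, using the zero-density estimate \eqref{eqn:zerodensity} to locate zero-free blocks, applying Lemma \ref{lemma GS} uniformly there, and running the Voronin-type argument inside a good block --- is precisely the paper's approach (the paper builds the sequence $T_j$ directly inside the proof of Lemma \ref{lem:shortDirichletpoly uncond} rather than talking about a block decomposition, but the idea is the same). However, two points in your reasoning are wrong and need to be fixed.

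First, the abscissa $\alpha$ must be chosen \emph{away} from $\sigma_0-r$, not close to it. The error term in Lemma \ref{lemma GS} involves $y^{\sigma_1^{\mathrm{GS}}-\sigma}$ with $\sigma_1^{\mathrm{GS}}\approx\alpha$ (the left edge of the zero-free rectangle), so at the worst point $\sigma=\sigma_0-r$ of the disc the gain is roughly $Y^{\alpha-(\sigma_0-r)}$. As $\alpha\to\sigma_0-r$ this exponent vanishes, the required $A$ blows up, and the Dirichlet polynomial approximation fails. You want $\alpha$ to sit well to the left of the disc; the zero-density estimate permits any fixed $\alpha>\tfrac12$, and the paper takes $\alpha=\tfrac14+\tfrac{\sigma_1}{2}$, the midpoint of $[\tfrac12,\sigma_1]$.

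Second, your explanation for why the threshold doubles from $2/(2(\sigma_0-r)-1)$ to $4/(2(\sigma_0-r)-1)$ is not correct. The bound $|\log\zeta|\ll\log|t|\,\log(e/(\sigma-\sigma_0))$ in Lemma \ref{lemma GS} is unconditional and is the \emph{same} in both regimes --- there is no extra power of $\log T$ lost. What changes is the exponent of $Y$: under RH the rectangle reaches $\Re(s)>\tfrac12$, giving roughly $Y^{\frac12-\sigma}$, while unconditionally with the midpoint choice $\alpha=\tfrac14+\tfrac{\sigma_1}{2}$ one gets $Y^{\frac14+\frac{\sigma_1}{2}-\sigma}=Y^{\frac12(\frac12-\sigma_1)}$ at $\sigma=\sigma_1$, i.e.\ the exponent is halved, which forces $A$ (and hence $B$) to be doubled. (Consequently the specific constant $4/(2(\sigma_0-r)-1)$ is an artefact of the midpoint choice, not an intrinsic loss; the argument works for any fixed $\alpha>\tfrac12$ and the threshold depends on how close $\alpha$ is to $\tfrac12$.) Your closing worry about ``proving the required quantitative form of Lemma \ref{lemma GS} unconditionally'' is thus misplaced: Lemma \ref{lemma GS} is already unconditional, and the only issue is choosing the zero-free abscissa correctly.
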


Our approach cannot prove Theorem \ref{main theorem} unconditionally, because it is not possible to find a Dirichlet polynomial approximation to $\log \zeta(s)$ around nontrivial zeros of $\zeta(s)$. One may improve our theorems replacing $D$ by compact sets with connected complements without much difficulties by following references. However, improving $H$ shorter or replacing $\limsup$ by $ \liminf$  in Theorem \ref{main theorem uncond} would be challenging.

 In Section \ref{section lemmas} we introduce required lemmas. Then we prove Theorem \ref{main theorem} in Section \ref{section proof 1} and Theorem \ref{main theorem uncond} in Section \ref{section proof 2}.

\section{Auxiliary lemmas}\label{section lemmas}

The first lemma provides a short Dirichlet polynomial approximation to $\log \zeta(s)$ provided that there are no zeros of $\zeta(s) $ around $ s $. 

\begin{lemma}[{\cite[Lemma 1]{GS}}] \label{lemma GS}
Let $ y \geq 2 $ and $ |t| \geq y+3$ be real numbers. Let $ \frac12 \leq \sigma_0 < 1 $ and suppose that the rectangle $ \{ z : \sigma_0 < \Re(z) \leq 1 , | \Im (z) - t | \leq y+2 \} $ is free of zeros of $ \zeta(z)$. Then for any $ \sigma_0 < \sigma \leq 2 $ and $ | \xi - t | \leq y $ we have
$$ | \log \zeta ( \sigma + i \xi ) | \ll \log |t| \log ( e/ ( \sigma - \sigma_0 ) ) . $$
Further for $ \sigma_0 < \sigma \leq 1 $ we have
$$ \log \zeta( \sigma + i t ) = \sum_{n \leq y} \frac{ \Lambda(n)}{ n^{ \sigma + it } \log n } + O \bigg(   \frac{ \log |t|}{ ( \sigma_1 - \sigma_0 )^2 } y^{ \sigma_1 - \sigma } \bigg) , $$
where we put $ \sigma_1 = \min \bigg( \sigma_0 + \frac{1}{ \log y} , \frac{ \sigma+ \sigma_0 }{2} \bigg) $.\pagebreak
\end{lemma}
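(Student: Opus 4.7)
The plan is to establish both assertions by complex-analytic manipulations on the zero-free rectangle, where $g(z):=\log\zeta(z)$ is holomorphic. For the first, pointwise, bound I would apply the Borel--Carath\'eodory theorem. On the right edge of the rectangle (say $\Re z = 2$) one trivially has $|g(z)|=O(1)$, and the functional equation together with standard convexity gives $\Re g(z) = \log|\zeta(z)| \ll \log|t|$ throughout the rectangle. Borel--Carath\'eodory converts an upper bound on $\Re g$ into a bound on $|g|$ itself, with the constant controlled by the ratio of the outer radius to the gap between the outer and inner circles. Choosing concentric discs centred near $2+i\xi$ whose outer circle reaches $\Re z = \sigma_0$ and evaluating at $\sigma+i\xi$---a point at distance $\sigma-\sigma_0$ from the outer boundary---produces the claimed factor $\log(e/(\sigma-\sigma_0))$.

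For the Dirichlet polynomial approximation, I would start from the Mellin--Perron identity
\[
\sum_{n\leq y}\frac{\Lambda(n)}{n^s\log n} \;=\; \frac{1}{2\pi i}\int_{(c)}\log\zeta(s+w)\,\frac{y^w}{w}\,dw,
\]
valid for $c>\max(0,1-\Re s)$ via the Dirichlet series $\log\zeta(s+w)=\sum_n \Lambda(n)/(n^{s+w}\log n)$ and the step-function identity for $y^w/w$. Shift the line of integration from $\Re w = c$ to $\Re w = \sigma_1-\sigma<0$; the only singularity crossed is the simple pole of $y^w/w$ at $w=0$, with residue $\log\zeta(s)$. Truncating the contour at $|\Im w|\asymp y$ is legitimate because the $+2$ margin in the hypothesis $|\Im z - t|\leq y+2$ keeps the entire contour inside the zero-free rectangle, and rearranging yields
\[
\log\zeta(s) \;=\; \sum_{n\leq y}\frac{\Lambda(n)}{n^s\log n} \;-\; \frac{1}{2\pi i}\int_{(\sigma_1-\sigma)}\log\zeta(s+w)\,\frac{y^w}{w}\,dw \;+\; E,
\]
with $E$ collecting horizontal-contour and tail contributions.

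The bound from Step 1 applies throughout the shifted contour, since $\Re(s+w)=\sigma_1>\sigma_0$, giving $|\log\zeta(s+w)|\ll \log|t|\log(e/(\sigma_1-\sigma_0))$. Combined with $|y^w/w| = y^{\sigma_1-\sigma}/|w|$ and the elementary estimate $\int_{|\tau|\leq y} d\tau/|\sigma_1-\sigma+i\tau|\ll \log(y/(\sigma-\sigma_1))$, the main vertical integral has size
\[
\log|t|\,\log(e/(\sigma_1-\sigma_0))\,\log(y/(\sigma-\sigma_1))\,y^{\sigma_1-\sigma}.
\]
The interpolation $\sigma_1=\min(\sigma_0+1/\log y,(\sigma+\sigma_0)/2)$ forces $\sigma_1-\sigma_0$ to be comparable to $\min(1/\log y,(\sigma-\sigma_0)/2)$, and a short case analysis in each regime collapses the two logarithmic factors into $(\sigma_1-\sigma_0)^{-2}$, recovering the claimed error. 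The horizontal segments are bounded analogously and contribute no worse.

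The main obstacle will be the careful accounting of the powers of $(\sigma_1-\sigma_0)^{-1}$: one factor arises from the Borel--Carath\'eodory bound near the left edge of the rectangle, and a second from the $1/w$-kernel integral as the shifted contour approaches the pole. Sharpening the accounting from a weak logarithmic loss to the precise $(\sigma_1-\sigma_0)^{-2}$ is exactly what forces the interpolating definition of $\sigma_1$, and matching the horizontal-contour errors against the vertical main term is the only genuinely delicate analytic estimate.
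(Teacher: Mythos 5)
The paper does not actually prove this lemma; it is imported verbatim from Granville--Soundararajan (their Lemma~1), so there is no in-paper proof to compare against. Your overall strategy for the second assertion---Perron integral for the truncated Dirichlet polynomial, contour shift to $\Re w=\sigma_1-\sigma$ picking up the residue $\log\zeta(s)$ at $w=0$, truncation at height $\asymp y$ protected by the $+2$ margin, then estimating the shifted integral via the pointwise bound and the $1/|w|$ kernel---is the standard route and matches GS in spirit, and your case analysis reconciling $\log(e/(\sigma_1-\sigma_0))\cdot\log(y/(\sigma-\sigma_1))$ with $(\sigma_1-\sigma_0)^{-2}$ under the interpolating definition of $\sigma_1$ is sound.

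There is, however, a genuine gap in your proof of the first, pointwise, assertion. A single application of Borel--Carath\'eodory on discs centred at $2+i\xi$, evaluated at a point lying at distance $\sigma-\sigma_0$ from the outer boundary $\Re z=\sigma_0$, produces the factor $2r/(R-r)\asymp 1/(\sigma-\sigma_0)$, not the claimed $\log(e/(\sigma-\sigma_0))$; the bound you actually obtain this way is $\log|t|/(\sigma-\sigma_0)$, which is polynomially worse as $\sigma\to\sigma_0^+$. The logarithmic dependence comes from a different argument: write
$\log\zeta(\sigma+i\xi)=\log\zeta(2+i\xi)-\int_\sigma^2(\zeta'/\zeta)(u+i\xi)\,du$
and use the partial-fraction formula
$(\zeta'/\zeta)(s)=\sum_{|\gamma-\Im s|\le 1}(s-\rho)^{-1}+O(\log|t|)$.
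In the zero-free rectangle every zero with $|\gamma-\xi|\le 1$ has $\beta\le\sigma_0$ and there are $O(\log|t|)$ of them, so $|\zeta'/\zeta(u+i\xi)|\ll\log|t|/(u-\sigma_0)$, and integrating over $u\in[\sigma,2]$ gives precisely $\log|t|\log(e/(\sigma-\sigma_0))$. (Borel--Carath\'eodory is used, but to establish the partial-fraction formula for $\zeta'/\zeta$ from the Hadamard product, not applied directly to $\log\zeta$.) This defect happens not to propagate into your second assertion: even with the weaker $\log|t|/(\sigma_1-\sigma_0)$ in place of $\log|t|\log(e/(\sigma_1-\sigma_0))$, the product with $\log(y/(\sigma-\sigma_1))$ is still $\ll(\sigma_1-\sigma_0)^{-2}$, so the Dirichlet-polynomial approximation survives. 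But as written your argument does not establish the first assertion of the lemma, and since that bound is used uniformly on the shifted contour you should fix it rather than rely on the accidental absorption.
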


By modifying Lemma \ref{lemma GS}, we find suitable representations of $ \log \zeta(s)$ for $ s \in \U$ for our proof of universality. 

\begin{lemma}\label{lem:shortDirichletpoly}
Assume RH.   Let $ \frac12 < \sigma_1 <1 $, $A>0$ and $ \widetilde{T} >10$ be fixed real numbers.  Let  $ Y = ( \log T)^A $, then
we have
$$  \log \zeta ( \sigma + it ) =  - \sum_{ p \leq Y  } \log \left( 1 - \frac{1}{ p^{ \sigma + it }} \right) + O\left(   ( \log T)^{1+ A (\frac12 - \sigma_1) }( \log \log T)^2   \right)   $$
for $ \sigma_1  \leq  \sigma \leq 1 $ and $ T-1 \leq t - \widetilde{T} \leq 2T +1$ as $T \to \infty$.
\end{lemma}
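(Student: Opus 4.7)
My plan is to derive the asymptotic directly from Lemma~\ref{lemma GS} with $\sigma_0=\frac12$, which is legitimate under RH since every non-trivial zero of $\zeta$ lies on $\Re(s)=\frac12$, and then convert the prime-power sum produced by Lemma~\ref{lemma GS} into the sum of Euler factors $-\sum_{p\le Y}\log(1-p^{-s})$ at negligible cost.

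First I would verify the hypotheses of Lemma~\ref{lemma GS} with $y=Y$ and $\sigma_0=\frac12$: for $t-\widetilde T\in[T-1,2T+1]$ and $T$ large one has $|t|\ge Y+3$ since $Y=(\log T)^A$ grows slower than $T$, and under RH the rectangle $\{z:\frac12<\Re(z)\le 1,\ |\Im(z)-t|\le Y+2\}$ is zero-free. The auxiliary parameter appearing in Lemma~\ref{lemma GS} is $\sigma^*=\min\bigl(\frac12+1/\log Y,\ (\sigma+\frac12)/2\bigr)$; for $T$ sufficiently large the first argument is the smaller one uniformly for $\sigma\in[\sigma_1,1]$, because $(\sigma-\frac12)/2\ge(\sigma_1-\frac12)/2$ is bounded below while $1/\log Y\to 0$. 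Using $\sigma^*-\frac12=1/\log Y\asymp 1/(A\log\log T)$, the identity $Y^{1/\log Y}=e$, and $\log|t|\ll\log T$, the error term in Lemma~\ref{lemma GS} becomes
\begin{equation*}
\frac{\log|t|}{(\sigma^*-1/2)^2}\,Y^{\sigma^*-\sigma}\ll (\log T)(\log\log T)^2\cdot Y^{1/2-\sigma}\le (\log T)^{1+A(1/2-\sigma_1)}(\log\log T)^2
\end{equation*}
uniformly in $\sigma\in[\sigma_1,1]$, which already matches the target.

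Next I would pass from prime powers to primes. Expanding the Euler factors and the formula $\Lambda(n)/\log n=1/k$ on prime powers $n=p^k$,
\begin{equation*}
-\sum_{p\le Y}\log\Bigl(1-\frac{1}{p^s}\Bigr)-\sum_{n\le Y}\frac{\Lambda(n)}{n^s\log n}=\sum_{p\le Y}\sum_{\substack{k\ge 1\\ p^k>Y}}\frac{1}{k p^{ks}}.
\end{equation*}
For $\sqrt Y<p\le Y$ only $k=2$ contributes and the total is $\ll\sum_{\sqrt Y<p\le Y}p^{-2\sigma}\ll Y^{1/2-\sigma}$, a tail estimate for the convergent series $\sum_p p^{-2\sigma}$ (valid since $2\sigma>1$); for $p\le\sqrt Y$ the smallest admissible exponent $k_p$ satisfies $p^{k_p}>Y$, so the geometric series gives $\sum_{k\ge k_p}p^{-k\sigma}/k\ll Y^{-\sigma}$ and summing over $p\le\sqrt Y$ produces $\ll Y^{1/2-\sigma}/\log Y$. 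Both contributions are $O((\log T)^{A(1/2-\sigma_1)})$, which is absorbed into the Lemma~\ref{lemma GS} error above.

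The only delicate point is tracking the parameter $\sigma^*$ in Lemma~\ref{lemma GS} so that $\sigma^*-\sigma_0=1/\log Y$ holds uniformly on $[\sigma_1,1]$, which is what makes the final bound depend only on the lower endpoint $\sigma_1$ and not on the particular $\sigma$; the passage from prime powers to primes is a routine Euler-product expansion.
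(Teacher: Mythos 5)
Your proposal is correct and follows essentially the same route as the paper: apply Lemma~\ref{lemma GS} with $\sigma_0=\tfrac12$ and $y=Y$ (RH making the rectangle zero-free for every $t$ in the range), track $\sigma^*-\tfrac12=1/\log Y$ to turn the error into $(\log T)^{1+A(1/2-\sigma)}(\log\log T)^2\le(\log T)^{1+A(1/2-\sigma_1)}(\log\log T)^2$, and then absorb the discrepancy $-\sum_{p\le Y}\log(1-p^{-s})-\sum_{p^k\le Y}\frac{1}{kp^{ks}}=\sum_{p\le Y}\sum_{p^k>Y}\frac{1}{kp^{ks}}\ll Y^{1/2-\sigma}/\log Y$ into that error. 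The only cosmetic blemish is the phrase ``only $k=2$ contributes'' for $\sqrt Y<p\le Y$ (all $k\ge2$ contribute, though $k=2$ dominates); the displayed bound is still right.
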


\begin{lemma}\label{lem:shortDirichletpoly uncond}
 Let $ \frac12 < \sigma_1 <1 $, $A,B>0$ and $ \widetilde{T} >10$ be fixed real numbers. There exists an increasing sequence $ T_j  \to \infty$ such that 
$$  \log \zeta ( \sigma + it ) =  - \sum_{ p \leq Y_j  } \log \left( 1 - \frac{1}{ p^{ \sigma + it }} \right) + O\left(   ( \log T_j )^{1+ A (\frac12 - \sigma_1) }( \log \log T_j )^2   \right)   $$
for $ \sigma_1  \leq  \sigma \leq 1 $, $Y_j = ( \log T_j )^A$, $H_j = ( \log T_j )^B$  and $ T_j -1 \leq t -\widetilde{T} \leq T_j + H_j+1 $ as $T_j  \to \infty$.
\end{lemma}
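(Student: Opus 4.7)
The plan is to follow the same strategy as for Lemma~\ref{lem:shortDirichletpoly}, replacing the use of the Riemann Hypothesis (which gives a zero-free half-plane $\Re z > \tfrac12$) by a pigeonhole argument based on the zero-density estimate~\eqref{eqn:zerodensity}. The latter produces infinitely many vertical windows of polylogarithmic height containing no nontrivial zeros with real part exceeding $\tfrac12 + o(1)$, inside which Lemma~\ref{lemma GS} can be applied directly.

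Concretely, I would take $\sigma_0 := \tfrac12 + c/\log\log T$ in the invocation of Lemma~\ref{lemma GS}, with $c>0$ a small fixed constant. With $y = Y = (\log T)^A$ one has $\log y = A\log\log T$, so the parameter ``$\sigma_1$'' of Lemma~\ref{lemma GS} (which, to avoid a clash with the $\sigma_1$ of the present statement, I rename $\sigma_0' := \sigma_0 + 1/\log y$) satisfies $\sigma_0' - \sigma_0 \asymp 1/\log\log T$. The Lemma~\ref{lemma GS} error accordingly becomes
\[
\ll \log T \cdot (\log\log T)^2\cdot Y^{\sigma_0-\sigma}\cdot Y^{1/\log y} \ll (\log T)^{1 + A(\sigma_0-\sigma)}(\log\log T)^2,
\]
and since $(\log T)^{A(\sigma_0-1/2)} = e^{Ac} = O(1)$ and $\sigma\geq \sigma_1$, this is bounded by the claimed error. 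The conversion from $\sum_{n\leq Y}\Lambda(n)/(n^{\sigma+it}\log n)$ to $-\sum_{p\leq Y}\log(1-p^{-\sigma-it})$ contributes only prime-power tails of order $Y^{1/2-\sigma}$, absorbed into the error.

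The core task is to locate $T_j$ so that the zero-freeness hypothesis of Lemma~\ref{lemma GS} holds for every $t$ with $\widetilde{T}+T_j-1 \leq t \leq \widetilde{T}+T_j+H_j+1$. This amounts to forbidding zeros $\beta+i\gamma$ of $\zeta$ with $\beta > \sigma_0$ and $\gamma$ in the dilated interval $I_j := [\widetilde{T}+T_j-Y_j-3,\ \widetilde{T}+T_j+H_j+Y_j+3]$, of length $W_j := H_j+2Y_j+6\ll (\log T_j)^{\max(A,B)}$. I would partition each dyadic block $[T,2T]$ into $\asymp T/W$ disjoint subintervals of length $W$; a subinterval is \emph{bad} if it contains such a zero. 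By~\eqref{eqn:zerodensity}, the total number of bad subintervals is at most $2\,N(\sigma_0,2T+O(1))\ll T^{3/2-\sigma_0}(\log T)^5 = T\,(\log T)^5\, T^{-c/\log\log T}$. Since $T^{c/\log\log T}=\exp(c\log T/\log\log T)$ grows faster than any fixed power of $\log T$, good subintervals constitute a proportion $1-o(1)$ of all subintervals, hence they certainly exist; selecting one per dyadic scale produces the required increasing sequence $T_j\to\infty$.

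The main obstacle is not any individual estimate but simply verifying that the zero-density bound is strong enough at $\sigma_0 = \tfrac12+c/\log\log T$ to accommodate windows of polylogarithmic width. The saving $T^{-c/\log\log T}$ beats every $(\log T)^{-K}$ with $K$ fixed, which is precisely why no restriction on $A$ or $B$ (beyond positivity) is required here. Once $T_j$ is chosen, the remainder of the argument is identical to the RH case treated in Lemma~\ref{lem:shortDirichletpoly}.
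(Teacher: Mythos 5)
Your proof is correct, and it takes a genuinely different (and, in fact, sharper) route than the paper.

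Both arguments share the same skeleton: apply the zero-density bound \eqref{eqn:zerodensity} at some abscissa $\sigma_0 > \tfrac12$, pigeonhole along $[T,2T]$ to locate a window of width $\asymp H_j + Y_j$ in which $\zeta$ has no zeros with $\Re(s)>\sigma_0$, and then feed that zero-free rectangle into Lemma~\ref{lemma GS}, finishing with the prime-power-tail estimate \eqref{eqn:prime sum difference}. The difference is the choice of $\sigma_0$. The paper fixes $\sigma_0=\tfrac14+\tfrac{\sigma_1}{2}$ (the midpoint of $[\tfrac12,\sigma_1]$), so that $N(\sigma_0,2T)\ll T^{\frac54-\frac{\sigma_1}{2}}(\log T)^5 = T^{1-\delta}(\log T)^5$ with a fixed $\delta>0$; this is comfortably $o\bigl(T/(\log T)^{\max(A,B)}\bigr)$, but the resulting error from Lemma~\ref{lemma GS} is only $(\log T_j)^{1+\frac{A}{2}(\frac12-\sigma_1)}(\log\log T_j)^2$, since $Y_j^{\sigma_0-\sigma}\leq Y_j^{\frac14-\frac{\sigma_1}{2}}$. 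You instead slide $\sigma_0=\tfrac12+c/\log\log T_j$ toward $\tfrac12$; the zero-density saving $T^{-c/\log\log T}$ is subpolynomial in $T$ yet still dominates every fixed power of $\log T$, which is exactly what the polylogarithmic window width requires, and in exchange $Y_j^{\sigma_0-\frac12}=e^{Ac}=O(1)$, so the Lemma~\ref{lemma GS} error becomes the full $(\log T_j)^{1+A(\frac12-\sigma_1)}(\log\log T_j)^2$.

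Two remarks. First, your argument is what actually delivers the error exponent $A(\tfrac12-\sigma_1)$ appearing in the lemma's statement; the paper's own choice of $\sigma_0$ only yields the weaker exponent $\tfrac{A}{2}(\tfrac12-\sigma_1)$, and indeed the constraint $B>\tfrac{4}{2(\sigma_0-r)-1}$ in Theorem~\ref{main theorem uncond} is calibrated to that weaker bound. If one substitutes your version of the lemma, the unconditional Theorem~\ref{main theorem uncond} would hold already for $B>\tfrac{2}{2(\sigma_0-r)-1}$, matching the RH case. Second, two small points worth making explicit in a polished write-up: $\sigma_0=\tfrac12+c/\log\log T_j$ is eventually $<\sigma_1$ so Lemma~\ref{lemma GS} applies with $\sigma>\sigma_0$, and since the pigeonholing selects a window inside $[T,2T]$ while the parameters $Y_j,H_j$ depend on $T_j$ itself (which is only determined afterward), one should take the partition width slightly larger than $H_j+2Y_j+O(1)$ evaluated at $2T$; this costs nothing since all quantities are comparable within a dyadic block.
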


The difficulty in the unconditional case comes from the branches of $ \log \zeta(s)$. It is resolved by means of the zero density estimate \eqref{eqn:zerodensity}. 
 Now we prove Lemmas \ref{lem:shortDirichletpoly} and \ref{lem:shortDirichletpoly uncond} both.

\begin{proof}[Proof of Lemma \ref{lem:shortDirichletpoly}]
By Lemma \ref{lemma GS} with $ \sigma_0 = \frac12$ and $y = Y= ( \log T)^A$,  we have
$$ \log \zeta(s ) = \sum_{ p^k \leq Y} \frac{1}{ k p^{ks}} + O ( ( \log T)^{1+ A( \frac12 - \sigma)} ( \log \log T)^2 ) $$
for $ \sigma > \frac12 $ and $ T \ll t \ll T$. 
Since
\begin{equation}\label{eqn:prime sum difference}
\begin{split}
	- \sum_{p\leq Y}\log\left(1-\frac{1}{p^{s}}\right)  - \sum_{ p^k \leq Y} \frac{1}{ k p^{ks}} &= \sum_{p\leq Y}\sum_{k>\tfrac{\log Y}{\log p}}\frac{1}{kp^{ks}}\\
	&= \sum_{p\leq \sqrt{Y}}\sum_{k>\tfrac{\log Y}{\log p}}\frac{1}{kp^{ks}}+ \sum_{\sqrt{Y}<p\leq Y}\sum_{k\geq 2}\frac{1}{kp^{ks}} \ll  \frac{Y^{\tfrac{1}{2}-\sigma}}{\log Y}  ,
\end{split}
\end{equation}
the lemma follows.
\end{proof}

\begin{proof}[Proof of Lemma \ref{lem:shortDirichletpoly uncond}]
By \eqref{eqn:zerodensity} we have
$$  N \left(   \frac14 + \frac{ \sigma_1}{2} , 2T  \right)  \ll T^{ \frac54 - \frac{ \sigma_1}{2}  } ( \log T)^5. $$
Thus, for every sufficiently large $T$, there exists $ T_j \in [T, 2T]$ such that there are no zeros of $\zeta(s)$ on $ \Re(s) > \frac14 + \frac{ \sigma_1}{2}$ and $ | \Im (s) - T_j | \leq  2 + Y_j  + 2H_j   $ with $  Y_j = ( \log T_j )^A $. By Lemma \ref{lemma GS} we have
$$ \log \zeta(s ) = \sum_{ p^k \leq Y_j } \frac{1}{ k p^{ks}} + O ( ( \log T_j )^{1+ A( \frac12 - \sigma_1 )} ( \log \log T_j )^2 ) $$
for $ \sigma_1 \leq \sigma \leq 1 $ and $ | t - T_j | \leq 2 H_j $. 
Applying \eqref{eqn:prime sum difference} to the above equation proves the lemma.
\end{proof}

We list three lemmas from \cite{K-V} with minor modifications and without proofs. The first is to find a suitable approximation for $\log  f(s) $ in Theorems \ref{main theorem} and \ref{main theorem uncond}. 

\begin{lemma}[{Lemma 1 in \cite[Chapter VII, \S1]{K-V}}]\label{lem:denseness}
Let $D=D_r(s_0)  \subset\U$. Suppose that $g(s)$ be continuous on $D$ and analytic on the interior of $D$. Then, for every $\varepsilon>0$ and $y>0$, there is a finite set of primes $M\supset \{p:p\leq y\}$ such that
\[
\max_{s\in D}\left|g(s)-\sum_{p\in M}\log\left(1-\frac{e(-\theta_p)}{p^s}\right)^{-1}\right|<\varepsilon
\]
for some sequence $(\theta_p)_{p}$ of real numbers indexed by primes, where as usual $e(x) = \exp(2\pi i x)$.
\end{lemma}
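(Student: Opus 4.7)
The plan is to follow the classical Voronin--Bagchi denseness strategy (cf.\ \cite[Ch.~VII]{K-V}), working in the Banach space $A(D)$ of functions continuous on $D$ and analytic in its interior, with sup norm. Since we are free to choose $\theta_p$ for $p\leq y$, I would first fix $\theta_p=0$ there and move the corresponding terms to the target side, replacing $g$ by $g_1(s):=g(s)+\sum_{p\leq y}\log(1-p^{-s})\in A(D)$. Taylor-expanding $-\log(1-e(-\theta_p)p^{-s})=e(-\theta_p)p^{-s}+R_p(s,\theta_p)$ yields $|R_p(s,\theta_p)|\ll p^{-2(\sigma_0-r)}$ uniformly in $\theta_p$, and since $2(\sigma_0-r)>1$ the total tail $\sum_{p>y'}|R_p|$ can be made as small as we like for a sufficiently large threshold $y'$; absorbing the primes $y<p\leq y'$ with $\theta_p=0$ into $g_1$ in the same manner reduces the task to approximating a (modified) target in $A(D)$ by finite sums $\sum_{p\in M'}e(-\theta_p)p^{-s}$ over primes $p>y'$.

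Next I would show that the closed $\mathbb{C}$-linear span of $\{p^{-s}\}_{p\text{ prime}}$ equals $A(D)$ via a Hahn--Banach argument. If some nonzero bounded linear functional annihilates every $p^{-s}$, the Riesz theorem provides a nonzero complex Borel measure $\mu$ on $D$ with $\int p^{-s}\,d\mu(s)=0$ for all primes $p$. Then $F(z):=\int e^{-zs}\,d\mu(s)$ is entire of exponential type at most $|s_0|+r$ and vanishes at $z=\log p$ for all primes $p$. By the prime number theorem, $\#\{p:\log p\leq R\}=\pi(e^R)\sim e^R/R$ vastly exceeds the bound $n_F(R)=O(R)$ on zeros of an entire function of exponential type coming from Jensen's formula, so $F\equiv 0$. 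Expanding $F$ at $z=0$ forces $\int s^n\,d\mu=0$ for every $n\geq 0$, hence $\mu=0$, a contradiction.

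Finally, to pass from $\mathbb{C}$-linear combinations to unimodular ones I would invoke a Pechersky-type rearrangement theorem. By Mergelyan's theorem, first approximate $g_1$ in $A(D)$ by a polynomial $P$ within $\varepsilon/3$. Choose an open disc $D'\supset D$ with $\overline{D}\subset D'$ and $D'$ crossing the line $\Re s=\tfrac12$, and set $H=L^2(D',\mathrm{area})$. In this Bergman space $\|p^{-s}\|_H^2\asymp p^{-2(\sigma_0-r')}$ for $r'$ chosen with $\sigma_0-r'<\tfrac12$, so $\sum_p\|p^{-s}\|_H^2=\infty$ and $\|p^{-s}\|_H\to 0$; the entire-function argument above also shows the $\mathbb{C}$-span is dense in $H$. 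Pechersky's theorem then yields a finite $M'$ and phases $\theta_p$ with $\|P-\sum_{p\in M'}e(-\theta_p)p^{-s}\|_H$ arbitrarily small, and since the difference is holomorphic on $D'$ the Bergman mean-value inequality $\sup_D|\cdot|\lesssim\|\cdot\|_H$ transfers this to a sup-norm bound on $D$. The main obstacle is orchestrating this Hilbert-space setup so that Pechersky's divergence hypothesis holds \emph{and} $H$-approximation of $P$ implies sup-norm approximation on $D$; the enlarged disc crossing the critical line is the standard device, but verifying all hypotheses cleanly is the technical heart of the argument.
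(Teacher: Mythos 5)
Your overall blueprint matches the proof in Karatsuba--Voronin \cite[Chapter~VII, \S1]{K-V}, which is what the paper cites for this lemma (the paper itself gives no proof): absorb the primes $p\le y$ by a shift of the target, control the higher-order Taylor tail using $2(\sigma_0-r)>1$, reduce to unimodular approximation of a polynomial in a Hilbert space over a slightly enlarged disc via Mergelyan, and transfer back to the sup norm by a mean-value inequality. The Hahn--Banach/Riesz step together with exponential type plus Jensen zero-counting and the prime number theorem correctly establishes density of the $\mathbb{C}$-linear span of $\{p^{-s}\}$ in the relevant Bergman subspace (one should work in the subspace of analytic elements, since $L^2(D')$ itself contains nonzero elements orthogonal to every analytic function, but that is a cosmetic point).

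The genuine gap is in the Pechersky step. Pechersky's rearrangement theorem requires (i) $\sum_n\|u_n\|_H^2<\infty$ and (ii) $\sum_n|\langle u_n,e\rangle_H|=\infty$ for \emph{every} nonzero $e\in H$. You instead feed it ``$\sum_n\|u_n\|_H^2=\infty$, $\|u_n\|_H\to 0$, and the $\mathbb{C}$-span is dense,'' and this substitute is false: in $\ell^2$ take $u_1=e_1$ and $u_n=n^{-1/2}e_1+n^{-2}e_n$ for $n\ge 2$; then $\|u_n\|\to0$, $\sum\|u_n\|^2=\infty$, and the $\mathbb{C}$-span contains every $e_n$ hence is dense, yet every finite unimodular combination has $n$-th coordinate of modulus $\le n^{-2}$, so $\tfrac12 e_2$ is not approximable. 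Moreover your device of letting $D'$ cross $\Re s=\tfrac12$ precisely to force $\sum_p\|p^{-s}\|_H^2=\infty$ destroys hypothesis (i), which is what guarantees the rearranged series converges at all (and hence that its finite truncations approximate). The correct setup keeps $D'\subset\{\Re s>\tfrac12\}$, so that $\sum_p\|p^{-s}\|_H^2<\infty$ while still $\sum_p\|p^{-s}\|_H=\infty$, and then proves (ii) directly: for nonzero $e$ set $F(z)=\langle e^{-zs},e\rangle_H$, an entire function of exponential type with indicator diagram controlled by $D'$, and deduce that $\sum_p|F(\log p)|<\infty$ would force $F\equiv 0$ (and so $e=0$) via a Bernstein/Carlson-type theorem on the logarithmic integral of $|F|$ along the real axis, combined with the prime number theorem. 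This is a different and harder argument than the Jensen zero-count you use for span density, since $F(\log p)$ need not vanish, only be summable; it is the part you label ``the technical heart'' and then do not supply.
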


We will apply Lemma \ref{lem:denseness} to $g(s) = \log f(s)$. Then we need to find a connection between $p^{i\tau} = e\left( \frac{ \tau   \log p  }{ 2 \pi } \right) $ in Lemmas \ref{lem:shortDirichletpoly} and \ref{lem:shortDirichletpoly uncond} and $ e( - \theta_p)$ in Lemma \ref{lem:denseness}. To be more specific, define 
\begin{equation}\label{def C delta}\begin{split}
	C(\delta,M,T) &= \left\{\tau\in [T,T+H]: \left\Vert\theta_p - \tfrac{\tau\log p}{2\pi}\right\Vert<\tfrac{\delta}{2}\text{ for }p\in M \right\}, \\
	C(\delta,M) &=\left\{(\vartheta_p)_p\in\Omega: \left\Vert\theta_p -\vartheta_p\right\Vert<\tfrac{\delta}{2}\text{ for }p\in M \right\},
\end{split}\end{equation} 
where $M$ is a finite set of primes and  $\Omega$ is the set of all sequences of real numbers in $[0, 1]$ indexed by the primes. Assume that $ 0 < H \leq T$ and $ H \to \infty $ as $ T \to \infty$. Then by Lemma \ref{lem:KroneckerWeyl} below we have
\begin{equation}\label{eqn:C delta asymp}
\mu(C(\delta,M, T))\sim H\mu(C(\delta,M)) = \delta^{|M|} H,
\end{equation}
where $|M|$ denotes the number of elements of the set $M$.
  
\begin{lemma}[Kronecker-Weyl]\label{lem:KroneckerWeyl}
Let $\alpha_1,\ldots,\alpha_n\in\mathbb{R}$ be linearly independent over $\mathbb{Q}$, and $\gamma\subset[0,1]^n$ be the set with Jordan volume $\Gamma$. Assume that $ 0 < H = H(T) \leq T $ and $ H \to \infty $ as $ T \to \infty$ and let  $I_\gamma(T)  = \mu\{\tau\in [T,T+H]: (\tau\alpha_1,\ldots,\tau\alpha_n)\in \gamma\mod{1}\}$. Then
\[
\lim_{T\to\infty}\frac{I_\gamma(T)}{H} = \Gamma.
\] 
\end{lemma}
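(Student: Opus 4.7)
The plan is to establish the result via Weyl's equidistribution criterion, adapted to the short-interval setting. The crucial observation is that we never need to compare $H$ with $T$; the argument only uses $H \to \infty$.

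First, I would reduce to testing against trigonometric polynomials on $(\mathbb{R}/\mathbb{Z})^n$. By Weyl's criterion it suffices to show that for every nonzero integer vector $\mathbf{k}=(k_1,\ldots,k_n)$,
\[
\frac{1}{H}\int_T^{T+H} e(k_1\alpha_1\tau+\cdots+k_n\alpha_n\tau)\,d\tau \longrightarrow 0
\]
as $T\to\infty$. Setting $\beta=k_1\alpha_1+\cdots+k_n\alpha_n$, the linear independence of $\alpha_1,\ldots,\alpha_n$ over $\mathbb{Q}$ guarantees $\beta\neq 0$, and direct evaluation of the resulting geometric-type integral yields
\[
\left|\frac{1}{H}\int_T^{T+H} e(\beta\tau)\,d\tau\right|\leq \frac{1}{\pi H|\beta|},
\]
which tends to $0$ since $H\to\infty$, independently of $T$. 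By linearity this extends to all trigonometric polynomials.

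Next, I would pass from trigonometric polynomials to indicators of Jordan-measurable sets by the standard sandwich argument. For any $\varepsilon>0$, Jordan measurability of $\gamma$ supplies continuous $1$-periodic functions $f^{-}\leq \mathbf{1}_\gamma\leq f^{+}$ on the torus with $\int_{[0,1]^n}(f^{+}-f^{-})\,dx<\varepsilon$. By Fej\'er's theorem each $f^{\pm}$ is a uniform limit of trigonometric polynomials, so the previous step gives convergence of the short-interval averages of $f^{\pm}$ to $\int_{[0,1]^n} f^{\pm}\,dx$. Sandwiching $I_\gamma(T)/H$ between these averages and letting $\varepsilon\to 0$ delivers the claimed limit $\Gamma$.

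I do not foresee a serious obstacle in any of these steps. The engine of the proof is the uniform-in-$T$ bound $|\int_T^{T+H}e(\beta\tau)\,d\tau|\leq 1/(\pi|\beta|)$, which lets the division by $H\to\infty$ kill each nonzero Fourier mode. The only mildly delicate point is constructing the sandwich functions $f^{\pm}$ from Jordan measurability, a standard thickening-of-the-boundary argument that produces $f^{\pm}$ with arbitrarily small integral gap.
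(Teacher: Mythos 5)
Your proof is correct and is essentially the approach the paper intends: the paper does not write out a proof but points to Theorem 1 in \cite[Appendix, \S 8]{K-V}, which is the classical Weyl equidistribution theorem for the curve $\tau\mapsto(\alpha_1\tau,\ldots,\alpha_n\tau)$, and says to modify it. Your three-step reduction (exponential sums, then continuous functions via Fej\'er, then indicators of Jordan sets via the sandwich) is exactly that modification, and you correctly identify the one place where short intervals enter: the bound $\bigl|\int_T^{T+H}e(\beta\tau)\,d\tau\bigr|\le 1/(\pi|\beta|)$ is uniform in $T$, so dividing by $H\to\infty$ suffices and the hypothesis $H\le T$ plays no role.
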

 One can find a proof of Lemma \ref{lem:KroneckerWeyl} by modifying the proof of Theorem 1 in \cite[Appendix, \S 8]{K-V}. We also need a minor modification of Theorem 3 in \cite[Appendix, \S 8]{K-V} and write it as a next lemma.

\begin{lemma}\label{lem:unifDistCurv}
Suppose that the curve $\gamma(\tau)$ is uniformly distributed mod $1$ in short intervals in $\mathbb{R}^n$. Let $D$ be a closed and Jordan measurable subregion of the unit cube in $\mathbb{R}^n$, and let $\Omega$ be a family of complex-valued continuous functions defined on $D$. If $\Omega$ is uniformly bounded and equicontinuous, then the following relation holds uniformly with respect to $F \in\Omega$:
\[
\lim_{T\to\infty}\frac{1}{H}\int_{\mathcal{S} } F (\{\gamma(\tau)\})d\tau = \int_D F(x_1 , \ldots, x_n )  dx_1\cdots dx_n,
\]
where $ 0 < H \leq T$, $H\to\infty$ as $T\to\infty$,  $\mathcal{S} = \{  \tau\in[T,T+H]  : \gamma(\tau)\in D \mod{1}  \} $, and $\{\gamma(\tau)\} = (\{\gamma_1(\tau)\} ,\ldots,\{\gamma_n(\tau)\})$.
\end{lemma}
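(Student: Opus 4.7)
The plan is to adapt the classical argument for Theorem 3 in \cite[Appendix, \S 8]{K-V}, approximating each $F\in\Omega$ uniformly by a step function on a fine grid of sub-cubes of $[0,1]^n$ and then invoking the short-interval uniform distribution hypothesis on each cell. First I would fix $\varepsilon>0$, set $M=\sup_{F\in\Omega,\,x\in D}|F(x)|$ (finite by uniform boundedness), and use equicontinuity to produce a single $\delta>0$, independent of $F$, such that $|F(x)-F(y)|<\varepsilon$ whenever $x,y\in D$ and $|x-y|<\delta$. Next I would partition $[0,1]^n$ into finitely many sub-cubes $B_1,\dots,B_N$ of diameter less than $\delta$, refining if needed so that the total volume $\Sigma$ of those $B_j$ meeting $\partial D$ is less than $\varepsilon$; this refinement is possible by the Jordan measurability of $D$. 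Write $\mathcal{J}_{\mathrm{int}}=\{j:B_j\subset D\}$ and $\mathcal{J}_{\mathrm{bdy}}=\{j:B_j\cap\partial D\neq\emptyset\}$, and select a point $x_j\in B_j\cap D$ in each relevant cell.

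A direct estimate then gives the Riemann-sum bound
\[
\Bigl|\int_D F(x)\,dx - \sum_{j\in\mathcal{J}_{\mathrm{int}}} F(x_j)\,|B_j|\Bigr| \leq \varepsilon + M\Sigma \leq (1+M)\varepsilon,
\]
uniformly in $F\in\Omega$. On the analytic side, set $\mathcal{S}_j=\{\tau\in[T,T+H]:\{\gamma(\tau)\}\in B_j\}$ and split
\[
\frac{1}{H}\int_{\mathcal{S}} F(\{\gamma(\tau)\})\,d\tau = \sum_{j\in\mathcal{J}_{\mathrm{int}}}\frac{1}{H}\int_{\mathcal{S}_j} F(\{\gamma(\tau)\})\,d\tau + \sum_{j\in\mathcal{J}_{\mathrm{bdy}}}\frac{1}{H}\int_{\mathcal{S}\cap\mathcal{S}_j} F(\{\gamma(\tau)\})\,d\tau.
\]
For $j\in\mathcal{J}_{\mathrm{int}}$ one has $\mathcal{S}_j\subset\mathcal{S}$ and the integrand differs from $F(x_j)$ by at most $\varepsilon$, while for $j\in\mathcal{J}_{\mathrm{bdy}}$ the integrand is bounded by $M$. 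Applying the short-interval uniform distribution hypothesis to each (Jordan measurable) $B_j$ yields $\mu(\mathcal{S}_j)/H\to|B_j|$, uniformly in $j$ since there are only finitely many cells; hence the interior sum converges to $\sum_{j\in\mathcal{J}_{\mathrm{int}}} F(x_j)\,|B_j|+O(\varepsilon)$, and the boundary sum is asymptotically bounded by $M\Sigma\leq M\varepsilon$. Combining these estimates one obtains
\[
\limsup_{T\to\infty}\Bigl|\frac{1}{H}\int_{\mathcal{S}} F(\{\gamma(\tau)\})\,d\tau - \int_D F(x)\,dx\Bigr|\ll (1+M)\varepsilon,
\]
and letting $\varepsilon\to 0$ finishes the argument.

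Uniformity in $F\in\Omega$ is automatic: the partition uses only the common equicontinuity modulus of $\Omega$, the constant $M$ is common to the whole family, and the rate of convergence $\mu(\mathcal{S}_j)/H\to|B_j|$ involves only the finitely many cells of a fixed partition. The main obstacle I anticipate is the behavior on the boundary strip $\{x:\dist(x,\partial D)<\delta\}$, where neither $\mathbf{1}_D$ nor the step-function approximation to $F$ is accurate; this is handled purely by Jordan measurability of $D$, which guarantees that the boundary strip can be made to have arbitrarily small volume as the mesh is refined. Everything else reduces to finitely many direct applications of the short-interval uniform distribution of $\gamma(\tau)$.
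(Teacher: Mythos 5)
Your proof is correct and is essentially the standard step-function approximation argument that underlies Theorem~3 in \cite[Appendix, \S 8]{K-V}, which the paper cites without reproducing its proof (the lemma is presented as a ``minor modification'' of that result, the modification being the passage from $[0,T]$ to the short interval $[T,T+H]$). Since the paper gives no proof of its own, there is no competing argument to compare against; your write-up supplies the details the paper elides.

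A few points worth tightening if this were to be inserted as an actual proof. First, the grid cubes $B_j$ should be taken half-open (or one should note that overlaps occur only on a set of volume zero) so that the decomposition $\int_{\mathcal{S}}=\sum_j\int_{\mathcal{S}\cap\mathcal{S}_j}$ is exact; the hypothesis on uniform distribution is stated for boxes $[a_1,b_1]\times\cdots\times[a_n,b_n]$, and these half-open cells are of that form. Second, one should make explicit that a single mesh parameter $k$ can be chosen large enough to guarantee simultaneously that the cell diameter $\sqrt{n}/k$ is less than the equicontinuity modulus $\delta$ \emph{and} that the total volume of boundary cells is below $\varepsilon$; both requirements are monotone in $k$, so this is unproblematic. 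Third, the uniformity in $F$ deserves one explicit sentence: once $\varepsilon$ is fixed, the partition $\{B_j\}$ is fixed and finite and does not depend on $F$, so one may pick $T_0$ (independent of $F$) such that $|\mu(\mathcal{S}_j)/H-|B_j||$ is small for all the finitely many $j$ when $T>T_0$; combined with the $F$-uniform constants $M$ and $\delta$, this gives the asserted uniform convergence over $\Omega$. With these minor clarifications, the argument is complete.
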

 Note that the curve $\gamma(\tau)$ is uniformly distributed mod $1$ in short intervals if for every $\Pi = [a_1,b_1]\times\cdots\times[a_n,b_n]$, $0\leq a_j<b_j\leq 1$ for $j=1,2,\ldots,n$, it holds
\[
\lim_{T\to\infty}\frac{1}{H}\mu\left\{\tau\in[T,T+H]: \gamma(\tau)\in \Pi\mod{1}\right\}=\prod_{j=1}^n(b_j-a_j)
\]
for $0 < H \leq T $ and $ H \to \infty $ as $T\to\infty$.

\section{Proof of Theorem \ref{main theorem}}\label{section proof 1}

Let $ H = ( \log T)^B$ and $Y = ( \log T)^A$ with $ B>A> \frac{2}{ 2( \sigma_0 -r)-1}$. By Lemma \ref{lem:shortDirichletpoly}   there exists $ c_1>0$ such that 
\begin{equation}\label{eqn:proof inequality 1}
\max_{s\in D} \left|\log\zeta(s+i\tau)-\sum_{p\leq Y}\log\left(1-\frac{1}{p^{s+i\tau}}\right)^{-1}\right| \leq (\log T)^{-c_1}
\end{equation}
for all $\tau\in[T,T+H]$. Let $\varepsilon>0$ and $ y >0$.   By Lemma \ref{lem:denseness} with $ g(s) = \log f(s)$, there is a finite set of primes $M$ containing $ \{p:p\leq y\}$ such that
\begin{equation}\label{eqn:proof inequality 2}
\max_{s\in D}\left|\log f(s) - \sum_{p\in M}\log\left(1-\frac{e(-\theta_p)}{p^s}\right)^{-1}\right|<\varepsilon
\end{equation}
for some $(\theta_p)_p \in \Omega$.
By \eqref{def C delta} and uniform continuity, there exists $ \delta >0 $ such that 
\begin{equation}\label{eqn:proof inequality 3}
\max_{s\in D}\left|\sum_{p\in M}\log\left(1-\frac{e(-\theta_p)}{p^s}\right) - \sum_{p\in M}\log\left(1-\frac{1}{p^{s+i\tau}}\right)\right|<\varepsilon
\end{equation}
for $\tau \in C(\delta,M,T)$. Note that by \eqref{eqn:C delta asymp} such $ \tau \in C(\delta, M,T)$ are a positive proportion of $ [T, T+H]$.

Let $D'=D_R(s_0) \subset \U $ with $R=\tfrac{1}{2}(\sigma_0+r-\tfrac{1}{2})>r$ and $\sigma'_1=\min_{s\in D'}\Re(s) = \sigma_0 - R $.  Let us choose 
\begin{equation}\label{choice of y star}
y^*>\max\{y\delta^{|M|(1-2\sigma'_1)^{-1}},\max\{p:p\in M\}\}
\end{equation}
 and $P$ be the greatest prime number $\leq y^*$.  
By Lemma \ref{lem:unifDistCurv} we obtain
\begin{align*}
	 \lim_{T\to\infty}\frac{1}{H}\int_{C(\delta,M,T)}\bigg|\sum_{p\in M}\log\left(1-\frac{1}{p^{s+i\tau}}\right) & -\sum_{p\leq y^*}\log\left(1-\frac{1}{p^{s+i\tau}}\right)\bigg|^2 d\tau\\
	& =\int_{C(\delta,M)}\bigg|\sum_{\substack{p\leq y^*\\p\not\in M}}\log\left(1-\frac{e(-\vartheta_p)}{p^{s}}\right)\bigg|^2 d\vartheta_2\cdots d \vartheta_P\\
	& =\delta^{|M|}\int_0^1\cdots\int_0^1 \bigg|\sum_{\substack{p\leq y^*\\p\not\in M}}\log\left(1-\frac{e(-\vartheta_p)}{p^{s}}\right)\bigg|^2 \bigg( \prod_{ \substack{ p \leq y^* \\  p \not\in M } } d \vartheta_p   \bigg)\\
	& =\delta^{|M|}\sum_{\substack{p\leq y^*\\p\not\in M}}\sum_{k=1}^\infty\frac{1}{k^2p^{2k\sigma}} \\
	& \leq\delta^{|M|}\sum_{p>y } \frac{1}{p^{2\sigma}} \sum_{k=1}^\infty\frac{1}{k^2} \leq  c_2\delta^{|M|}y^{1-2\sigma}
\end{align*}
for  $ \sigma \geq \sigma'_1 $ and  some $ c_2 > 0$.
Hence, 
\begin{equation}\label{eqn:mean square c3}
\int_{C(\delta,M,T)}\iint_{D'}\left|\sum_{p\in M}\log\left(1-\frac{1}{p^{s+i\tau}}\right)-\sum_{p\leq y^*}\log\left(1-\frac{1}{p^{s+i\tau}}\right)\right|^2 ds d\tau\leq c_3 \delta^{|M|}y^{1-2\sigma'_1} H 
\end{equation}
for some $ c_3 >0$. 

Now, using the Montgomery-Vaughan inequality, we get
\begin{align*}
	&\int_T^{T+H}\left|\sum_{y^*<p\leq Y}\log\left(1-\frac{1}{p^{s+i\tau}}\right)\right|^2d\tau\\
	 &\qquad\qquad\qquad\qquad\qquad\leq 2\int_T^{T+H}\left|\sum_{y^*<p\leq Y}\frac{1}{p^{s+i\tau}}\right|^2d\tau+2\int_T^{T+H}\left|\sum_{y^*<p\leq Y}\sum_{k=2}^\infty\frac{1}{kp^{k(s+i\tau)}}\right|^2d\tau\\
	  &\qquad\qquad\qquad\qquad\qquad=2\sum_{y^*<p\leq Y}\frac{1}{p^{2\sigma}}(H+O(Y))  + O(H{y^*}^{1-2\sigma}) = O( H{y^*}^{1-2 \sigma} ) .
\end{align*}
Thus, by \eqref{choice of y star} and the above inequality  we have
\begin{equation}\label{eqn:mean square c4}
\begin{split}
\int_{C(\delta,M,T)} \iint_{D'} &  \left|\sum_{y^*<p\leq Y}\log\left(1-\frac{1}{p^{s+i\tau}}\right)\right|^2dsd\tau \\
& 
\leq \int_T^{T+H}\iint_{D'}\left|\sum_{y^*<p\leq Y}\log\left(1-\frac{1}{p^{s+i\tau}}\right)\right|^2dsd\tau\leq c_4  \delta^{|M|}y^{1-2\sigma'_1} H  
\end{split}\end{equation}
for some $ c_4 > 0 $. By \eqref{eqn:mean square c3} and \eqref{eqn:mean square c4} we have
\begin{equation}\label{eqn:C delta M T integral1}
\int_{C(\delta,M,T)} \iint_{D'}  \left|\sum_{\substack{ p \leq Y \\ p \not \in M} }\log\left(1-\frac{1}{p^{s+i\tau}}\right)\right|^2dsd\tau\leq c_5  \delta^{|M|}y^{1-2\sigma'_1} H 
\end{equation}
for some $ c_5 > 0$.

By \eqref{eqn:C delta M T integral1} we have
\[
\mu\left\{\tau\in C(\delta,M,T):\iint_{D'}\left|\sum_{\substack{p\leq Y\\p\not\in M}}\log\left(1-\frac{1}{p^{s+i\tau}}\right)\right|^2ds >  y^{\frac{1}{2}-\sigma'_1}\right\} \leq  c_5  \delta^{|M|}y^{\frac12 - \sigma'_1} H.
\]
For given $y$ we can choose  small $ \delta >0 $ such that $ c_5  \delta^{|M|}y^{\frac12 - \sigma'_1} < \frac12 $. Since $ \mu( C(\delta, M, T )) \sim \delta^{ |M|} H $ by \eqref{eqn:C delta asymp}, we have 
\[
\mu\left\{\tau\in C(\delta,M,T):\iint_{D'}\left|\sum_{\substack{p\leq Y\\p\not\in M}}\log\left(1-\frac{1}{p^{s+i\tau}}\right)\right|^2ds\leq y^{\frac{1}{2}-\sigma'_1}\right\} >\frac{1}{2}H\delta^{|M|}.
\]
It is well-known that
$$ |h(0)|^2 \leq \frac{1}{   \pi \ell^2 } \iint_{ |s| \leq \ell } |h(s)|^2 ds $$
for holomorphic function $h(s)$. Hence, we have
$$  \pi (R-r) \max_{s\in D}\left|\sum_{\substack{p\leq Y\\p\not\in M}}\log\left(1-\frac{1}{p^{s+i\tau}}\right)\right|^2 \leq \iint_{D'}\left|\sum_{\substack{p\leq Y\\p\not\in M}}\log\left(1-\frac{1}{p^{s+i\tau}}\right)\right|^2ds   $$
and  in consequence we have
\[
\mu\left\{\tau\in C(\delta,M,T): \max_{s\in D}\left|\sum_{\substack{p\leq Y\\p\not\in M}}\log\left(1-\frac{1}{p^{s+i\tau}}\right)\right|\leq \frac{1}{\sqrt{\pi}(R-r)}y^{\frac{1}{4}-\frac{1}{2}\sigma'_1}\right\}>\frac{1}{2}H\delta^{|M|}.
\]
Thus, for $y$ satisfying $ \frac{1}{\sqrt{\pi}(R-r)}y^{\frac{1}{4}-\frac{1}{2}\sigma'_1} \leq \varepsilon $, the set 
\begin{equation}\label{eqn:proof inequality 4}
E_T = \left\{\tau\in C(\delta,M,T):\max_{s\in D}\left|\sum_{p\in M}\log\left(1-\frac{1}{p^{s+i\tau}}\right)-\sum_{p\leq Y}\log\left(1-\frac{1}{p^{s+i\tau}}\right)\right|\leq \varepsilon\right\}
\end{equation}
has the measure $\mu(E_T)>\frac{1}{2}H\delta^{|M|}$.

By \eqref{eqn:proof inequality 1}, \eqref{eqn:proof inequality 2}, \eqref{eqn:proof inequality 3} and \eqref{eqn:proof inequality 4},    we have 
\[
\max_{s \in D} |\log\zeta(s+i\tau ) -  \log f(s)|< 4\varepsilon 
\]
for $\tau\in E_T $ and sufficiently large $T$. Thus, we have
	\[
	\liminf_{T\to\infty}\frac{1}{H}\mu\left\{\tau\in[T,T+H]: \max_{s\in D}| \log \zeta(s+i\tau) - \log f(s)|< 4 \varepsilon\right\} \geq \liminf_{T \to \infty} \frac{ \mu (E_T)}{H}   \geq \frac12 \delta^{|M|} >0,
	\]
which readily implies the theorem.

 \section{Proof of Theorem \ref{main theorem uncond}}\label{section proof 2}

Let  $ B> A> \frac{4}{ 2( \sigma_0 -r)-1}$. 
 By Lemma \ref{lem:shortDirichletpoly uncond}
   there exist $ c_1 >0$ and an increasing sequence $ T_j  \to \infty$ such that 
$$ \max_{s\in D} \left|\log\zeta(s+i\tau)-\sum_{p\leq Y_j }\log\left(1-\frac{1}{p^{s+i\tau}}\right)^{-1}\right| \leq (\log T_j )^{-c_1} $$
for  $ T_j \leq \tau \leq T_j + H_j $ as $T_j  \to \infty$, where
$Y_j = ( \log T_j )^A$  and $ H_j = ( \log T_j )^B$.  
We replace $H$, $T$ in the proof of Theorem \ref{main theorem} by $H_j$, $T_j$, respectively, except for \eqref{eqn:proof inequality 1},   then we find that
\[
	 \frac{1}{H_j }\mu\left\{\tau\in[T_j ,T_j +H_j ]: \max_{s\in D}| \log \zeta(s+i\tau) - \log f(s)|< 4 \varepsilon\right\} \geq  \frac{ \mu (E_{T_j} )}{H_j }   \geq \frac12 \delta^{|M|} .
	\]
This implies the theorem.

{\frenchspacing

}

\end{document}